\newtheorem{thm}{Theorem}[section]
\newtheorem{theorem}[thm]{Theorem}
\newtheorem{proposition}[thm]{Proposition}
\theoremstyle{remark}
\newtheorem{remark}[thm]{Remark}
\newcommand{\RR}{\mathbb R}
\title[Hessenberg Unitary Matrices]
{A Simple algorithm for constructing all real Hessenberg unitary matrices}
\author[J. Tremain
 ]{Janet C. Tremain}
\address{Department of Mathematics, University
of Missouri, Columbia, MO 65211-4100}
\thanks{The author was supported by DTRA/NSF 1042701}
\email{j.tremain@mchsi.com}
\begin{document}

\maketitle

\begin{abstract}
Unitary matrices which are zero below the secondary diagonal (Hessenberg
unitary matrices) have many
uses in analysis.  Given a set of needed conditions on a unitary matrix, this algorithm
will give the sparsest unitary matrix.
We give an algorithm for constructing all real Hessenberg unitary matrices.
The $n\times n$ unitary matrices given by the algorithm have $n-1$ variables which can be chosen
to give additional properties needed for a particular application.
\end{abstract}

\section{Introduction}

Hessenberg matrices have may uses in both pure an applied mathematics.
It is a well used fact that every matrix is unitarily equivalent to such a matrix and
there are polynomial time algorithms for doing this. 
In numerical linear algebra, they are used to speed-up eigenvalue computations.
Also, real Hessenberg unitary matrices have
various uses in analysis.  They also have the advantage of being quite sparse.
Here we give an algorithm which constructs all such real unitary matrices.
The algorithm starts with $2\times 2$ matrices and for each new dimension,
we drop the top row of the previous dimension, introduce a new variable and
then construct two new rows with the new variable added and finally use
the rest of the rows of the previous case by adding the first column of zeroes.
The $n\times n$ unitary matrices have $n-1$ variables which can be chosen
to give additional properties needed for a particular application.

This algorithm was developed at the request of the people working in the 
{\it Frame Research Center} (www.framerc.org) for their work in frame theory
where they needed the sparsest unitary matrices.

\begin{remark}
Since we can multiply rows and columns of a unitary by $-1$ and still have a 
unitary, and if we permute rows or columns while maintaining the
Hessenberg property we still have a unitary,
we will construct all real triangular unitary matrices {\it up to} such
changes in sign and permutations of rows or columns.
\end{remark}

\section{The Construction}

We will give an algorithm which constructs all real Hessenberg unitary matrices.  So we
can better see how the algorithm works, we will start by giving some small examples.

\subsection{$2\times2$ Matrices}

This is a unique class and is given by:

\[\begin{bmatrix}
\sqrt{1-z_1}&\sqrt{z_1}\\
-\sqrt{z_1}&\sqrt{1-z_1}
\end{bmatrix}\]

Here we must have $0\le z_1 \le1$.  

\subsection{$3\times 3$ Matrices}

This class is given by:
\[\begin{bmatrix}
\sqrt{1-z_2}& \sqrt{(1-z_1)z_2}& \sqrt{z_1z_2}\\
-\sqrt{z_2}& \sqrt{(1-z_2)(1-z_1)}& \sqrt{(1-z_2)z_1}\\
0& -\sqrt{z_1}& \sqrt{1-z_1}
\end{bmatrix}\]
Here, $0\le z_1,z_2 \le1$.

\begin{proposition}
This class of matrices consists of unitary matrices.
\end{proposition}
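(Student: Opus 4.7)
The plan is to verify the proposition by a direct, but organized, computation of the six inner products that encode orthonormality of the rows (equivalently, of the columns) of the given $3\times 3$ matrix. Since the matrix is real, unitarity just means orthogonality of rows together with unit row norms, so I will compute $r_i \cdot r_j$ for $i \le j$ in $\{1,2,3\}$.

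First I would handle the three row norms. Each squared norm collapses nicely once one factors out the $z_1$-dependence: for example, row $1$ gives $(1-z_2) + (1-z_1)z_2 + z_1 z_2 = (1-z_2) + z_2\bigl[(1-z_1)+z_1\bigr] = 1$, and row $2$ gives $z_2 + (1-z_2)\bigl[(1-z_1)+z_1\bigr] = 1$; row $3$ is immediate. This factoring pattern is the whole structural point — the entries of column $2$ and column $3$ are designed to share a common factor that, when combined, reconstitutes $z_1+(1-z_1)=1$, mirroring the recursive construction described in the introduction.

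Next I would verify the three orthogonality relations. The pair $r_1 \cdot r_2$ is the only nontrivial one: every term carries a factor of $\sqrt{z_2(1-z_2)}$, and after pulling it out the bracket becomes $-1 + (1-z_1) + z_1 = 0$. The pairs $r_1 \cdot r_3$ and $r_2 \cdot r_3$ each have only two nonzero terms, and in both cases the two terms are $-\sqrt{z_1(1-z_1)\cdot c}$ and $+\sqrt{z_1(1-z_1)\cdot c}$ with the same nonnegative constant $c$ (namely $z_2$ in the first case and $1-z_2$ in the second), so they cancel outright. Since all quantities under the square roots are nonnegative by the hypothesis $0\le z_1,z_2\le 1$, there is no ambiguity in taking square roots.

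There is no real obstacle here — the only thing to be careful about is to group the square-root factors before squaring or cancelling, so that the computation stays short. The cleanest presentation is to fix a column-factorization notation (pulling out $\sqrt{z_2(1-z_2)}$, respectively $\sqrt{z_1(1-z_1)}$) and let the identities $z_1+(1-z_1)=1$ and $z_2+(1-z_2)=1$ do all the work.
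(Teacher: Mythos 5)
Your proposal is correct and follows essentially the same route as the paper: a direct verification of unit row norms and pairwise row orthogonality, factoring out the common radicals $\sqrt{z_2(1-z_2)}$ and $\sqrt{z_1(1-z_1)}$ so that the identities $z_i+(1-z_i)=1$ finish each computation. The only cosmetic difference is that you invoke the general fact that orthonormal rows of a real square matrix force orthonormal columns, whereas the paper re-derives the column statements by a $z_1 \leftrightarrow z_2$ symmetry; both are valid.
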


\begin{proof}
The proof is done by cases:
\vskip12pt
\noindent {\bf Case 1}:  The rows have norm 1.
\vskip12pt
The square sum of the elements of row 1 is
\[ (1-z_2)+(1-z_1)z_2+z_1z_2 = 1.
\]
The square sum of the elements of row 2 is
\[ z_2+(1-z_2)(1-z_1) + (1-z_2)z_1 = z_2+(1-z_2) =1.
\]
It is clear that row 3 is norm 1.
\vskip12pt
Because of the symmetry of the matrix (I.e. switching $z_1$ and
$z_2$ in the calculation), it is clear that the row sums and column sums
are equal.
\vskip12pt
\noindent {\bf Case 2}:  The rows are orthogonal.
\vskip12pt
The inner product of rows 1 and 2 is
\[ -\sqrt{(1-z_2)z_2}+ (1-z_1)\sqrt{(1-z_2)z_2}+z_1\sqrt{(1-z_2)z_2}=
\]
\[ \sqrt{(1-z_2)z_2}[-1+(1-z_1)+z_1]=0.\]
The inner product of rows 1 and 3 is
\[ -\sqrt{(1-z_1)z_1z_2}+\sqrt{(1-z_1)z_1z_2}=0.
\]
The inner product of rows 2 and 3 is
\[ -\sqrt{(1-z_2)(1-z_1)z_1}+ \sqrt{(1-z_2)(1-z_1)z_1}=0.\]
\vskip12pt
By symmetry again, the columns are orthogonal.
\end{proof}

We give some examples of how to use this:

\noindent {\bf Case 1}:  If $z_1=z_2=0$ we get
\[\begin{bmatrix}
1&0&0\\
0&1&0\\
0&0&1
\end{bmatrix}\]

\noindent {\bf Case 2}:  If we let $z_1=z_2=1$ we get
\[ \begin{bmatrix}
0&0&1\\
-1&0&0\\
0&-1&0
\end{bmatrix}\]

\noindent {\bf Case 3}:  If we let $z_1=1$ and $z_2=0$ we get
\[\begin{bmatrix}
1&0&0\\
0&0&1\\
0&-1&0
\end{bmatrix}\]

\noindent {\bf Case 4}:  If we let $z_1 = \frac{1}{2}$ and $z_2=1$ we get
\[\begin{bmatrix}
0& \sqrt{\frac{1}{2}}&\sqrt{\frac{1}{2}}\\
-1&0&0\\
0&-\sqrt{\frac{1}{2}}&\sqrt{\frac{1}{2}}
\end{bmatrix}\]

\noindent {\bf Case 5}:  If $z_1=\frac{1}{2}$ and $z_2= \frac{2}{3}$ we get
\[\begin{bmatrix}
\sqrt{\frac{1}{3}}&\sqrt{\frac{1}{3}}&\sqrt{\frac{1}{3}}\\
-\sqrt{\frac{2}{3}}&\sqrt{\frac{1}{6}}&\sqrt{\frac{1}{6}}\\
0& - \sqrt{\frac{1}{2}}&\sqrt{\frac{1}{2}}
\end{bmatrix}\]

\begin{theorem}
This algorithm gives all $3\times 3$ real Hessenberg unitary matrices.
\end{theorem}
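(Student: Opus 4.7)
The plan is to show that every $3\times 3$ real Hessenberg orthogonal matrix $U = [u_{ij}]$ can be brought into the exact form displayed at the start of this subsection by applying only the sign changes allowed in the Remark; no non-trivial permutations will be needed.

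I would proceed in four steps. First, I would normalize the border entries. The Hessenberg zero $u_{31} = 0$ together with unitarity gives $u_{11}^2 + u_{21}^2 = 1 = u_{32}^2 + u_{33}^2$. A short bookkeeping argument shows that the effective sign group $\{\pm 1\}^6 / \{\pm I\}$ of row and column flips contains an element achieving $u_{11} \geq 0$, $u_{21} \leq 0$, $u_{32} \leq 0$, and $u_{33} \geq 0$ simultaneously. This uses four of the five independent sign bits and determines unique $z_1, z_2 \in [0,1]$ via $u_{11} = \sqrt{1-z_2}$, $u_{21} = -\sqrt{z_2}$, $u_{32} = -\sqrt{z_1}$, $u_{33} = \sqrt{1-z_1}$.

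Second, I would force the interior entries up to signs using orthonormality. Orthogonality of column $2$ to column $1$ reads $\sqrt{1-z_2}\,u_{12} - \sqrt{z_2}\,u_{22} = 0$, so for $0 < z_2 < 1$ the entries $u_{12}$ and $u_{22}$ share a sign and their ratio is fixed. Combined with the column norm equation this yields $u_{22}^2 = (1-z_1)(1-z_2)$ and $u_{12}^2 = z_2(1-z_1)$, and the parallel argument on column $3$ yields $u_{23}^2 = z_1(1-z_2)$ and $u_{13}^2 = z_1 z_2$.

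Third, I would match the signs. Let $\epsilon \in \{\pm 1\}$ be the common sign of $(u_{12}, u_{22})$ and $\delta$ that of $(u_{13}, u_{23})$. Substituting the formulas above into the orthogonality of columns $2$ and $3$ reduces, after collecting a factor of $\sqrt{z_1(1-z_1)}$, to $\epsilon\delta = 1$; so all four interior entries share a common sign. A direct check shows the one remaining sign flip after Step 1 (concretely $r_1 = r_2 = c_1 = 1$, $r_3 = c_2 = c_3 = -1$) flips exactly $u_{12}, u_{13}, u_{22}, u_{23}$ while fixing $u_{11}, u_{21}, u_{32}, u_{33}$, so I can use it to force $\epsilon = +$, and the matrix then coincides with the stated form.

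The main obstacle is the sign bookkeeping in the first and third steps together with the degenerate cases where $z_1$ or $z_2$ equals $0$ or $1$, in which the ratios used in Step 2 become ill-defined. Each such case, however, produces additional zeros that immediately collapse the matrix to a smaller instance (for example, $z_2 = 0$ forces $u_{21} = 0$, hence $u_{12} = u_{13} = 0$, and the bottom-right $2 \times 2$ block is an arbitrary $2\times 2$ real orthogonal matrix handled by the $2 \times 2$ case), so each is verified by direct inspection against the corresponding specialization of the parametric formula.
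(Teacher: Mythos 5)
Your proof is correct and, at bottom, follows the same strategy as the paper's: exploit the structural zero $u_{31}=0$ together with the orthonormality relations to solve for every entry, reading $z_1$ and $z_2$ off the squares of the border entries. The difference is in the execution. The paper works row-wise: it sets $z_1=a_{32}^2$, observes that $(a_{12},a_{13})$ and $(a_{22},a_{23})$ both lie in the one-dimensional orthogonal complement of $(a_{32},a_{33})$ in $\mathbb{R}^2$ and hence are proportional, then solves for $z_2$. You work column-wise, pinning down $u_{12},u_{22}$ (resp.\ $u_{13},u_{23}$) from orthogonality to column $1$ plus the column norms, and using orthogonality of columns $2$ and $3$ for the final constraint $\epsilon\delta=1$; the two computations are essentially transposes of one another. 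What your version buys is precision on exactly the two points the paper leaves vague. First, the sign bookkeeping: the paper only says ``we may assume $a_{32}$ is negative,'' which by itself is not enough --- after normalizing the border one still needs a sign normalization of the four interior entries, and your identification of the residual flip $r_1=r_2=c_1=1$, $r_3=c_2=c_3=-1$ supplies precisely the missing degree of freedom. Second, the degenerate cases: the paper asserts that if $a_{32}$ or $a_{33}$ vanishes the matrix is a signed permutation of the identity, which is false (take $z_1=1$, $z_2=\tfrac12$: the resulting Hessenberg unitary has $a_{33}=0$ but its rows $1$--$2$, columns $1$ and $3$ form a nontrivial rotation); your reduction of these cases to the $2\times2$ instance is the correct way to handle them. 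So the route is the same, but your write-up is the one that actually closes the argument.
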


\begin{proof}
We will just outline this.  Given a unitary
\[\begin{bmatrix}
a_{11}&a_{12}&a_{13}\\
a_{21}&a_{22}&a_{23}\\
0&a_{32}&a_{33}
\end{bmatrix}\]
If one of $a_{32}$ or $a_{33}$ is zero, it is easily checked that the matrix is
a permutation of the rows or columns of the identity multiplied by $-1$ if necessary
and this matrix easily arises from our algorithm.  Otherwise,
we may assume $a_{32}$ is negative and let our $z_1= a_{32}^2$ to get the
last row correct given that it square sums to 1.  Now, the last row must be orthogonal
to the rows above it so $(a_{22},a_{23})= c(a_{12},a_{13})$ are the unique vectors
(up to length) in $\RR^2$ which are
orthogonal to row 3.  Since $z_1$ is given, we can find the unique $z_2$ which makes
the last two terms of our first and second rows equal to the given one.  The first column
is now uniquely determined and must be of our form.
\end{proof}

\subsection{$4\times 4$ Matrices}

We construct a unitary matrix by:

\[\begin{bmatrix}
\sqrt{1-z_3}& \sqrt{(1-z_2)z_3}&\sqrt{(1-z_1)z_2z_3}& \sqrt{z_1z_2z_3}\\
- \sqrt{z_3}& \sqrt{(1-z_3)(1-z_2)}& \sqrt{(1-z_3)(1-z_1)z_2}& \sqrt{(1-z_3)z_1z_2}\\
0& -\sqrt{z_2}& \sqrt{(1-z_2)(1-z_1)}& \sqrt{(1-z_2)z_1}\\
0&0& -\sqrt{z_1}&\sqrt{1-z_1}
\end{bmatrix}\]

We leave it to the reader to check that this constructs all $4\times 4$ real Hessenberg
unitary matrices.

\subsection{$5\times 5$ Matrices:  The Algorithm}

  Given our general $n\times n$ real Hessenberg unitary
matrix with $n-1$ variables in it, we introduce
 a new {\it multiplier} $\sqrt{z_{n+1}}$ on the second row of the $n\times n$ case and the
first entry in the second row of the $(n+1)\times (n+1)$ matrix is $-\sqrt{z_{n+1}}$.  We also need the
sum of the squares of the entries of the two new rows we are adding at the top to equal
the squares of the entries of the top row of the $n\times n$ matrix.  After that, we add the
remaining rows of the previous case with a column of zeroes in front.

So, to pass from the $4\times 4$ case to the $5\times 5$ case we create two new rows:

\[\begin{bmatrix}
\sqrt{1-z_4}&\sqrt{(1-z_3)z_4}& \sqrt{(1-z_2)z_3z_4}& \sqrt{(1-z_1)z_2z_3z_4}& \sqrt{z_1z_2z_3z_4}\\
-\sqrt{z_4}& \sqrt{(1-z_4)(1-z_3)}& \sqrt{(1-z_4)(1-z_2)z_3}& \sqrt{(1-z_4)(1-z_1)z_2z_3}&
\sqrt{(1-z_4)z_1z_2z_3}
\end{bmatrix}\] 

Combining this with the previous case we get:

\[\begin{bmatrix}
\sqrt{1-z_4}&\sqrt{(1-z_3)z_4}& \sqrt{(1-z_2)z_3z_4}& \sqrt{(1-z_1)z_2z_3z_4}& \sqrt{z_1z_2z_3z_4}\\
-\sqrt{z_4}& \sqrt{(1-z_4)(1-z_3)}& \sqrt{(1-z_4)(1-z_2)z_3}& \sqrt{(1-z_4)(1-z_1)z_2z_3}&
\sqrt{(1-z_4)z_1z_2z_3}\\
0&- \sqrt{z_3}& \sqrt{(1-z_3)(1-z_2)}& \sqrt{(1-z_3)(1-z_1)z_2}& \sqrt{(1-z_3)z_1z_2}\\
0&0& -\sqrt{z_2}& \sqrt{(1-z_2)(1-z_1)}& \sqrt{(1-z_2)z_1}\\
0&0&0& -\sqrt{z_1}&\sqrt{1-z_1}

\end{bmatrix}\] 

\begin{theorem}
The matrix given above is a unitary matrix.
\end{theorem}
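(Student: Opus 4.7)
The plan is to exploit the recursive structure of the construction, taking as input the fact that the $4\times 4$ matrix displayed just above is unitary (which the paper leaves to the reader in the preceding subsection).

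First I would write $r_i^{(5)}$ for the $i$-th row of the $5\times 5$ matrix and $r_i^{(4)}$ for the $i$-th row of the $4\times 4$ matrix, and then verify the three recursive identities
\begin{align*}
r_1^{(5)} &= \bigl(\sqrt{1-z_4},\ \sqrt{z_4}\,r_1^{(4)}\bigr),\\
r_2^{(5)} &= \bigl(-\sqrt{z_4},\ \sqrt{1-z_4}\,r_1^{(4)}\bigr),\\
r_i^{(5)} &= \bigl(0,\ r_{i-1}^{(4)}\bigr),\quad i=3,4,5,
\end{align*}
by matching entries column by column. These identities are just a direct transcription of what the algorithm does: two new top rows built from $r_1^{(4)}$ via the multiplier $\sqrt{z_4}$, and the remaining rows of the $4\times 4$ matrix shifted over by a leading column of zeros.

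Once the identities are in hand, I expect the rest of the proof to collapse onto the $4\times 4$ case. For the norms, $\|r_i^{(5)}\| = \|r_{i-1}^{(4)}\| = 1$ for $i=3,4,5$, while
\[
\|r_1^{(5)}\|^2 = (1-z_4)+z_4\|r_1^{(4)}\|^2 = 1,\qquad \|r_2^{(5)}\|^2 = z_4+(1-z_4)\|r_1^{(4)}\|^2 = 1.
\]
For orthogonality, rows $3$--$5$ are mutually orthogonal via $\ip{r_i^{(5)}}{r_j^{(5)}} = \ip{r_{i-1}^{(4)}}{r_{j-1}^{(4)}} = 0$; rows $1$ and $2$ are each orthogonal to any $r_j^{(5)}$ with $j\ge 3$ because the leading entry of $r_j^{(5)}$ vanishes, leaving $\sqrt{z_4}\,\ip{r_1^{(4)}}{r_{j-1}^{(4)}}$ (respectively $\sqrt{1-z_4}\,\ip{r_1^{(4)}}{r_{j-1}^{(4)}}$), which is zero by the inductive hypothesis; and finally
\[
\ip{r_1^{(5)}}{r_2^{(5)}} = -\sqrt{z_4(1-z_4)} + \sqrt{z_4(1-z_4)}\,\|r_1^{(4)}\|^2 = 0.
\]
Since the rows form an orthonormal system in $\RR^5$, the matrix is unitary, and orthonormality of the columns follows automatically.

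The one step that requires genuine care --- and really the only obstacle --- is the column-by-column verification of the three recursive identities above, because after that everything reduces to bookkeeping on inner products that are already controlled in the $4\times 4$ case. As a bonus, the same template produces a clean induction on $n$, which is presumably how the general algorithm would be justified in the sequel.
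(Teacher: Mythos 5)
Your proof is correct, and it takes a genuinely more structural route than the paper's. The paper also leans on the $4\times 4$ case for the mutual orthonormality of rows $3$--$5$, but it then verifies the two new row norms, all five column norms, and every inner product involving rows $1$ and $2$ by expanding the explicit entries and telescoping the resulting sums (a process error-prone enough that the printed computation contains stray $x_1$'s and $x_3$'s, and the inner products of rows $1,2$ with rows $3$--$5$ even spill outside the proof environment). Your three recursive identities --- that rows $1$ and $2$ of the $5\times5$ matrix are $\bigl(\sqrt{1-z_4},\ \sqrt{z_4}\,r_1^{(4)}\bigr)$ and $\bigl(-\sqrt{z_4},\ \sqrt{1-z_4}\,r_1^{(4)}\bigr)$ while rows $3$--$5$ are the old rows $2$--$4$ prefixed by a zero --- do check out entry by entry against the displayed matrices, and they replace all of those expansions with one-line computations controlled by $\|r_1^{(4)}\|=1$ and $\ip{r_1^{(4)}}{r_j^{(4)}}=0$; likewise your appeal to the fact that a square real matrix with orthonormal rows automatically has orthonormal columns eliminates the column-norm checks the paper carries out by hand. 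What your version buys, beyond brevity, is exactly the induction template for general $n$ that the paper gestures at but never writes down; what it costs is nothing beyond the dependency the paper's own proof already has on the unverified $4\times 4$ case (which you flag), so to be fully self-contained you should anchor the induction at the $2\times2$ or $3\times3$ case that the paper actually proves.
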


\begin{proof}
We will outline the proof.  Since the last three rows come from the previous unitary matrix,
we just need to check that the first two rows are orthogonal to all the others, that they are
norm 1 and the column vectors have norm 1.

\vskip12pt
\noindent {\bf The two new rows are norm 1}:

The norm of row 1 is:
\[ 1-z_4+(1-z_3)z_4 +(1-z_2)z_3z_4 + (1-z_1)z_2z_3z_4 + z_1z_2z_3z_4 =\]
\[ 1-z_3z_4+ (1-z_2)z_3z_4 + z_2z_3z_4 =
1-z_3z_4 +z_3z_4 = 1.
\]
The norm of row 2 is:
\[ z_4+(1-z_4)(1-z_3) + (1-z_4)(1-z_2)z_3+ (1-z_4)(1-z_1)z_2z_3 + (1-z_4)z_1z_2z_3=\]
\[z_4 + (1-z_4)[(1-z_3)+(1-z_2)z_3 + (1-z_1)z_2z_3 + z_1z_2z_3]=\]
\[ z_4+(1-z_4)[1-z_2z_3 + z_2z_3]= 1.
\]
\vskip12pt
\noindent {\bf The column vectors are norm 1}:

Column 1 square sums to $(1-z_4)+z_4=1$.  Column 2 square sums to
\[ (1-z_3)z_4+(1-z_4)(1-z_3) + z_3= 1-z_3+z_3=1.
\]
Column 3 square sums to
\[ (1-z_2)z_3z_4 + (1-z_4)(1-z_2)z_3 + (1-z_3)(1-z_2) + z_2=
\]
\[ (1-z_2)[z_3z_4+(1-z_4)z_3 + (1-z_3)]+z_2= (1-z_2)[1] + z_2=1.
\]
Column 4 square sums to
\[ (1-z_1)z_2z_3z_4 + (1-z_4)(1-z_1)z_2z_3+ (1-z_3)(1-z_1)z_2+
(1-z_2)(1-z_1) + z_1=\]
\[ (1-z_1)[z_2z_3z_4 + (1-z_4)z_2z_3+(1-z_3)z_2+(1-z_2)]+z_1=
\]
\[ (1-z_1)[z_2z_3+ 1-z_2z_3]+z_1= 1.
\]
Column 5 square sums to
\[ z_1z_2z_3z_4+ (1-z_4)z_1z_2z_3+ (1-z_3)z_1z_2+(1-z_2)z_1+(1-z_1)=
\]
\[ z_1z_2z_3 + z_1z_2-z_1z_2z_3 + x_1-z_1z_2 + 1-z_1= 1.
\]

\vskip12pt
\noindent {\bf The rows are orthogonal}:
The inner product of rows 1 and 2 is:
\[ -\sqrt{(z_4(1-z_4)}+\sqrt{(1-z_3)^2z_4(1-z_4)}+\sqrt{(1-z_2)^2z_3^2z_4(1-z_4)}+\]
\[
\sqrt{(1-z_1)^2z_2^2z_3^2z_4(1-z_4)}+\sqrt{z_1^2z_2^2z_3^2z_4(1-z_4)}=
\]
\[ -\sqrt{z_4(1-z_4)}+ (1-z_3)\sqrt{z_4(1-z_4)}+ (1-z_2)z_3\sqrt{z_4(1-z_4)}+\]
\[ (1-z_1)z_2z_3\sqrt{z_4(1-z_4)}+z_1z_2z_3\sqrt{z_4(1-z_4)}=\]
\[ \sqrt{z_4(1-z_4)}[ -1+(1-z_3)+(1-z_2)z_3+(1-z_1)z_2z_3+z_1z_2z_3]=
\]
\[ \sqrt{z_4(1-z_4)}[-1+(1-z_3)+z_3-z_2z_3+ z_2z_3-z_1z_2z_3+z_1z_2z_3]=0.
\]
\end{proof}

The inner product of rows 1 and 3 and taking our squares as above at the same time is
\[ -\sqrt{z_3(1-z_3)z_4}+ (1-z_2)\sqrt{z_3(1-z_3)z_4}+ (1-z_1)z_2\sqrt{x_3(1-z_3)z_4}+
z_1z_2\sqrt{z_3(1-z_3)z_4}=
\]
\[ \sqrt{z_3(1-z_3)z_4 }[ -1 + (1-z_2)+(1-z_1)z_2+z_1z_2]=0
\]
The inner product of rows 1 and 4 is
\[- \sqrt{z_2(1-z_2)z_3z_4}+ (1-z_1)\sqrt{(1-z_2)z_2z_3z_4}+
z_1\sqrt{(1-z_2)z_2z_3z_4}=\]
\[ \sqrt{z_2(1-z_2)z_3z_4}[-1+(1-z_1)+z_1]=0.
\]
The inner product of rows 1 and 5 is
\[ -\sqrt{z_1(1-z_1)z_2z_3z_4}+\sqrt{z_1(1-z_1)z_2z_3z_4}]=0.\]
The inner product of rows 2 and 3 is
\[ -\sqrt{(1-z_4)(1-z_3)z_3}+(1-z_2)\sqrt{(1-z_4)(1-z_3)z_3}+\]
\[(1-z_1)z_2\sqrt{(1-z_4)(1-z_3)z_3}
+ z_1z_2\sqrt{(1-z_4)(1-z_3)z_3}=\]
\[ \sqrt{(1-z_4)(1-z_3)z_3)}[-1 +(1-z_2)+(1-z_1)z_2+ z_1z_2]=0.
\]

The inner product of rows 2 and 4 is
\[ -\sqrt{(1-z_4)z_2(1-z_2)z_3}+(1-z_1)\sqrt{(1-z_4)(1-z_2)z_2z_3}+
z_1\sqrt{(1-z_4)(1-z_2)z_2z_3}=0.
\]
The inner product of rows 2 and 5 is
\[ -\sqrt{(1-z_4)z_1(1-z_1)z_2z_3}+\sqrt{(1-z_4)z_1(1-z_1)z_2z_3}=0.\]

\section{The General Case}

For the general case, we take the $n\times n$ case and delete the first row and
add a first column of zeroes.  Then we add two new rows at the top of this matrix by
choosing a new variable $z_n$ and making row 1 as:
\[ \sqrt{1-z_n}\ \ \sqrt{(1-z_{n-1})z_n}\ \ \sqrt{(1-z_{n-2})z_{n-1}z_n}\ \ 
\sqrt{(1-z_{n-3})z_{n-1}z_{n-2}}\ \  \cdots \ \]
\[\sqrt{(1-z_1)z_2z_3\cdots z_{n}} \ldots 
 \sqrt{z_1z_2\cdots z_n}\]
and making row 2 as
\[ -\sqrt{z_n}\ \ \sqrt{(1-z_n)(1-z_{n-1})}\ \ \sqrt{(1-z_n)(1-z_{n-2})z_{n-1}}\ \ 
\sqrt{(1-z_n)(1-z_{n-3})z_{n-1}z_{n-2}}\ \ 
\cdots\]
\[ \ \ \sqrt{(1-z_n)(1-z_1)z_2z_3\ldots z_{n-1}}
\ \ \sqrt{(1-z_n)z_1z_2z_3\ldots z_{n-1}}.\]

\begin{remark}
It takes a significant amount of effort to show that we can produce all the
required Hessenberg unitary matrices and it does not seem to be important enough
to justify the effort since our intention is not to publish this
but just post it on the arXiv so it is available to researchers.
  So we will not address this here.  Basically, it can be done by induction on $n$
  and a case analysis of the placement of zeroes.
\end{remark}

\end{document}